\newcommand{\Spec}{\mathrm{Spec}}
\newcommand{\f}{\mathcal{F}}
\newcommand{\ls}{\mathcal{L}}
\newcommand{\pp}{\mathbb{P}}
\newcommand{\nn}{\mathcal{N}}
\newcommand{\ga}{\mathfrak{g}}
\newcommand{\cG}{\mathcal{G}}
\newcommand{\cha}{\textrm{char}}
\newcommand{\Perv}{\mathrm{Perv}}
\newcommand{\tame}{\mathrm{tame}}
\newcommand{\remove}[1]{ }
\newcommand{\fg}{\mathfrak{g}}
\newcommand{\maps}{\rightarrow}
\newcommand{\ind}{\Gamma_B^G}
\newtheoremstyle{mybold}
{3pt}
{3pt}
{}
{0pt}
{\bfseries}
{.}
{.5em}
{}
\newtheorem{theorem}{Theorem}[section]
\newtheorem{lemma}[theorem]{Lemma}
\newtheorem{definition}[theorem]{Definition}
\newtheorem{proposition}[theorem]{Proposition}
\newtheorem{example}[theorem]{Example}
\theoremstyle{mybold}
\newtheorem{remark}[theorem]{Remark}
\pgfplotsset{compat=1.18}
\title{Character sheaves in characteristic $p$ have nilpotent singular support}
\author{Kostas I. Psaromiligkos}
\date{}
\address{Université Clermont Auvergne, CNRS, LMBP, F-63000 Clermont-Ferrand, France. email: konstantinos.psaromiligkos@uca.fr.}
\keywords{}
\begin{document}
	
	\begin{abstract}
		We prove that character sheaves of a reductive group defined over any characteristic have nilpotent singular support, partially extending the work of \cite{Mirkovic1988} and \cite{Ginzburg1989} to positive characteristic. We do this by introducing a category of tame perverse sheaves and studying its properties. 
	\end{abstract}
	
	\maketitle
	
	\section{Introduction}
	
	\subsection{Summary}
	Character sheaves were constructed by Lusztig in a series of papers \cite{Lusztig1985a}, \cite{Lusztig1985b}, \cite{Lusztig1985c}, \cite{Lusztig1986}, \cite{Lusztig1986a} and provide a geometric analog of character theory. 
	
	Despite their utility, the theory of character sheaves can be fairly technical. Shortly after their construction, Lusztig and Laumon conjectured that for reductive groups defined over a field $k$ of characteristic zero, there is a simple characterization of character sheaves in terms of their singular support. 
	
	Let $G$ be a reductive group over $k$ and $\nn\subseteq \fg$ be the nilpotent cone of $G$. It can be embedded in $\ga^*$ using the Killing form, and then we have $G\times \nn\subseteq T^*G\cong G\times \ga^*$. Mirkovic and Vilonen \cite{Mirkovic1988} and independently Ginzburg \cite{Ginzburg1989} proved that an irreducible $G$-equivariant perverse sheaf $\f$ on $G$ is a character sheaf if and only if $SS(\f)\subseteq G\times \nn$. Ginzburg employed the Riemann-Hilbert correspondence and the classical notion of singular support for $D$-modules, whereas Mirkovic and Vilonen used the microlocal definition of singular support of constructible sheaves on varieties over characteristic $0$ defined in \cite{Kashiwara1990}, which circumvents the use of $D$-modules. 
	
	For $k$ a field of characteristic $p$, there was no satisfactory notion of singular support until 2015, when Beilinson \cite{Beilinson2016} constructed the singular support of a constructible \'etale sheaf with torsion coefficients over any field. Beilinson's notion was extended to $l$-adic sheaves by Barrett \cite{Barrett2023}.
	
	In this paper, inspired by the construction of the tame site \cite{Huebner2021}, we define a category of {\em tame} perverse sheaves and study their functorial properties. This notion captures most sheaves used in the construction of character sheaves, and tame perverse sheaves behave similarly enough to the characteristic $0$ case. We then adapt Mirkovic and Vilonen's proof to show the following, without assumptions on $\cha(k)$.
	
	\begin{theorem}\label{main}
		Let $G$ be a reductive group over an algebraically closed field $k$. Then the singular support of a character sheaf is a subvariety of $G\times \nn$.
	\end{theorem}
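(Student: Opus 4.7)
The plan is to adapt the Mirković--Vilonen--Ginzburg argument to the tame setting, using the singular-support functoriality for tame perverse sheaves developed earlier in the paper as a replacement for the full microlocal calculus of \cite{Kashiwara1990}. By Lusztig's classification, every character sheaf on $G$ arises as a simple perverse constituent of an induced complex $\operatorname{ind}_P^G(K\boxtimes \ls)$, where $P\subseteq G$ is a parabolic with Levi factor $L$, $K$ is a cuspidal character sheaf on $L$, and $\ls$ is a tame local system on $Z(L)^\circ$. Since singular support is monotone under taking perverse subquotients, it suffices to prove that $SS(\operatorname{ind}_P^G(K\boxtimes\ls))\subseteq G\times\nn$, and I would proceed by induction on $\dim G$, the base case being tame local systems on a torus, whose singular support is the zero section of $T^*T$.

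For the inductive step I would work with the standard parabolic induction correspondence
\[
G\;\xleftarrow{\mu}\;\widetilde{G}_P\;\xrightarrow{p}\;L,
\]
where $\widetilde{G}_P=\{(g,hP)\in G\times G/P : h^{-1}gh\in P\}$, the map $\mu$ is proper, and $p$ is smooth (factoring through the quotient $P\to L$). The pullback $p^{*}(K\boxtimes\ls)$ remains a tame perverse sheaf by the closure properties established earlier, and its singular support is controlled by the inductive hypothesis on $L$ together with the smooth-pullback estimate for $p$. Applying the tame analogue of the proper-pushforward bound for $\mu_!$ then yields the desired estimate for $SS(\operatorname{ind}_P^G(K\boxtimes\ls))$ inside $T^*G\cong G\times\fg^*$.

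What remains is the classical Grothendieck--Springer moment-map computation: the conormal contributions produced by $\mu$, combined with the nilpotent covectors coming from the inductive hypothesis, lie entirely in $G\times\nn$, because the image of the moment map $T^*(G/P)\to \fg^*$ is nilpotent and nilpotency is preserved when twisting by the Levi datum and transporting back to $G$ via $\mu$. This part of the argument is essentially unchanged from \cite{Mirkovic1988}, once the microlocal estimates are available on our side of the Riemann--Hilbert divide.

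The main obstacle, and the reason the tame framework is indispensable, is that Beilinson's singular support in characteristic $p$ does not a priori satisfy the full suite of functorial estimates known in characteristic $0$; in particular, the smooth-pullback and proper-pushforward bounds can fail for wildly ramified sheaves. The bulk of the work is therefore to verify that every complex appearing in the induction diagram remains in the tame category, and that the required functorialities persist there, so that the classical Springer-type geometry can be invoked without modification. Once this is in place, Theorem~\ref{main} follows by tracing through the induction.
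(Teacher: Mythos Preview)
Your approach diverges from the paper's and has a genuine gap. The paper does \emph{not} induct on $\dim G$ via cuspidal pairs; it works directly with the definition of character sheaves as constituents of $K_w^{\ls}=\Gamma_B^G(A_w^{\ls})$ induced from a Borel. Using Lemma~\ref{indbound} (the bound $SS(\Gamma_B^G\f)\subseteq G\cdot SS(\f)$) together with $G$-invariance of $\nn$, everything reduces to showing $SS(A_w^{\ls})\subseteq G\times\nn$. Since $A_w^{\ls}$ is a tame perverse sheaf (Kummer $\Rightarrow$ tame, and tameness is preserved under smooth pullback), the key input is Lemma~\ref{conormality}: for a tame perverse sheaf whose ramification divisor admits a uniform resolution (here the Bott--Samelson resolution of Schubert varieties), the singular support is conormal to its base. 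The conormals to Bruhat cells are nilpotent, and the result follows.

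Your inductive scheme via parabolic induction from cuspidal pairs breaks down precisely at the cuspidal character sheaves on $G$ itself: when $P=G$ there is no proper Levi to which the inductive hypothesis applies, and you offer no separate argument for cuspidals. Any such argument would have to control $SS$ of a cuspidal sheaf directly, which is exactly the difficulty the paper's Borel-induction route is designed to avoid.

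You also misidentify where tameness enters. The smooth-pullback equality and proper-pushforward bound (Proposition~\ref{ssprop}(ii),(iii)) hold for \emph{all} constructible $\ell$-adic sheaves in Beilinson's theory; they do not fail for wildly ramified sheaves. What fails in characteristic~$p$ is \emph{conormality} of the singular support (cf.\ \cite[Example~1.6]{Beilinson2016}), and this is precisely what Lemma~\ref{conormality} salvages under the tameness hypothesis plus the existence of a uniform resolution. Your outline never invokes conormality, so even setting aside the cuspidal gap, the claim that ``this part of the argument is essentially unchanged from \cite{Mirkovic1988}'' is unsupported: the Mirkovi\'c--Vilonen computation of conormal bundles to Bruhat cells is only useful once you know $SS$ is conormal, and that is the step requiring genuine work in positive characteristic.
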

	
	\subsection{Outline}
	
	In Section 2, we recall the basic properties of the singular support that we will need.
	
	In Section 3, we define a general notion of tame perverse sheaves, and show that the category they form is suitable for our purposes, using Kerz and Schmidt's results about tameness of \'etale coverings \cite{Kerz2010}. 
	
	Upon trying to adapt the proof of Mirkovic and Vilonen to positive characteristic, we stumble in the following problem. Mirkovic and Vilonen use in an essential way the conormality of singular support to its base in characteristic $0$, which does not remain true in characteristic $p$. We show that conormality of the singular support is true for tame perverse sheaves, upon restrictions on the geometry of the ramification divisor.
	
	In Section 4, we prove Theorem \ref{main}.
	
	\subsection{Acknowledgements}
	
	I thank Sasha Beilinson, Siddharth Mahendraker, Luca Migliorini, Châu Ngô, Simon Riche, Takeshi Saito, Colton Sandvik, Will Sawin and Tong Zhou for their interest, comments, and valuable discussions. 
	
	In particular, I am greatly thankful to Sasha Beilinson for an abundance of helpful discussions regarding the notion of singular support, and Tong Zhou for helping correct a previous version of Lemma \ref{conormality}. 
	
	This work was part of my PhD thesis at the University of Chicago, advised by Châu Ngô, to whom I am also thankful for his mentorship and guidance throughout this process.
	
	This project has received funding from the European Research Council (ERC) under the European Union’s Horizon 2020 research and innovation programme (grant agreement No. 101002592), and by a Graduate Research Fellowship from the Onassis foundation.
	
	\section{Review on singular support}
	
	\subsection{Definition and properties of singular support}
	For us, \enquote{variety} means \enquote{$k$-scheme of finite type}. We call a variety $X$ smooth if it is smooth relative to $\Spec k$, and then $T^*X$ denotes the cotangent bundle relative to $k$.
	
	A \emph{test pair} for a variety $X$ is a correspondence of the form $X\xleftarrow{h} U\xrightarrow{f} Y$, where $U, Y$ are also varieties. Recall that a morphism $f:X\rightarrow Y$ of smooth schemes induces a map of vector bundles $df: T^*Y\times_Y X\rightarrow T^*X$. The following definitions are from \cite[\S 1]{Beilinson2016}, see also \cite[\S 1.3]{Barrett2023}.
	
	\begin{definition}
		Let $h:U\rightarrow X$ be a morphism where $U,X$ are smooth and $C\subseteq T^*X$ be a closed conical subset of the cotangent bundle of $X$.
		
		The map $h$ is called $C$-transversal at a geometric point $u\rightarrow U$ if for every $v\in C_{h(u)}$ nonzero we have that $dh(v)\in T_u^*U$ is also non-zero. If it is $C$-transversal at every geometric point, we call the map $C$-transversal.
		
		If $h$ is $C$-transversal, we define $C_U=C\times_X U$, and then the restriction of $dh$ to $C_U$ is a finite map by \cite[Lemma 1.2(ii)]{Beilinson2016}. Its image $h^{\circ}C$ is then a closed conical subset of $T^*U$, which unravelling the definition has fiber over $u$ given by $(h^{\circ}C)_u:=dh(C_{h(u)})\subseteq T_u^*U$.	
	\end{definition}
	
	\begin{remark}
		A smooth morphism is $C$-transversal for any $C$. A pair $(h,f)$ is transversal with respect to the zero section if and only if $f$ is smooth, and $T^*X$-transversal if and only if $h\times f$ is smooth, see \cite[Example 1.2]{Beilinson2016}.
	\end{remark}
	
	\begin{definition}
		Let $f:U\rightarrow Y$ be a morphism where $X,Y$ are smooth and $C\subseteq T^*Y$ be a closed conical subset of the cotangent bundle of $Y$.
		
		The map $f$ is called $C$-transversal at a geometric point $u\rightarrow U$ if for every nonzero $v\in T^*_{f(u)}Y$ we have $df(v)\notin C_u$. If it is $C$-transversal at every geometric point, we call the map $C$-transversal.
	\end{definition}
	
	Let $X$ be a smooth variety and $C\subseteq T^*X$ a closed conical subset of the cotangent bundle.
	
	\begin{definition}
		A test pair $(h,f)$ is called $C$-transversal if $Y,U$ are also smooth, and for every geometric point $u\rightarrow U$, $h$ is $C$-transversal at $u$ and $f$ is $h^{\circ}C_u$-transversal at $u$.
	\end{definition}
	
	For the definition of universal local acyclicity, see \cite[\S 1.2]{Barrett2023}.
	
	\begin{definition}
		Let $X$ is a smooth variety and $\f$ a constructible sheaf. A $C$-transversal pair is called $\f$-acyclic if $f$ is universally locally acyclic with respect to $h^*\f$.
		
		We say that $\f$ is \emph{microsupported} on $C$ if every $C$-transversal test pair is $\f$-acyclic.
	\end{definition}
	
	The existence of a minimal conical subvariety as in the next definition is \cite[Theorem 1.5]{Beilinson2016} for sheaves with torsion coefficients and \cite[Theorem 1.5]{Barrett2023} in general.
	
	\begin{definition}\label{ess}
		For any smooth variety $X$ and \'etale constructible sheaf $\f\in D(X)$, the \emph{singular support} $SS(\f)$ of $\f$ is defined to be the minimal conical closed subvariety $SS(\f)\subseteq T^*X$ of the cotangent bundle such that $\f$ is microsupported on $SS(\f).$
	\end{definition}	
	
	We recall certain functorial properties of the singular support that we will need. They are essentially the same as in the characteristic $0$ case.
	
	If $r: X
	\rightarrow Z$ is a map of smooth varieties and $C$ is a closed conical subset of $T^*X$
	whose base is proper over $Z$, $r_{\circ}C$ is defined as the image of $dr^{-1}(C)\subseteq T^*Z \times_Z X$ by the projection $T^*Z \times_Z X\maps T^*Z.$ It is a closed conical subset of $T^*Z$.
	
	\begin{proposition}\label{ssprop}
		Let $\f\in D(X)$ be a constructible sheaf on a smooth variety $X$. Then, the following are true.
		\begin{enumerate}[label=(\roman*)]
			\item The singular support of $\f$ is the union of the singular supports of its irreducible constituents.
			\item Let $h:X\rightarrow Y$ be a smooth morphism. Then $SS(h^*\f)=h^{\circ}SS(\f).$
			\item Let $r:X\rightarrow Z$ is a proper morphism. Then
			$SS(r_*\f)\subseteq r_{\circ}SS(\f).$
		\end{enumerate}	  
	\end{proposition}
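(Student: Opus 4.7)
For part (i), the plan is to exploit stability of universal local acyclicity (ULA) under extensions: if $\f_1\to \f\to \f_2\to$ is a distinguished triangle and both $h^*\f_1$ and $h^*\f_2$ are ULA with respect to $f$ for a fixed test pair $(h,f)$, then so is $h^*\f$. Applying this inductively to a composition series of the perverse cohomology sheaves of $\f$ yields $SS(\f)\subseteq \bigcup_i SS(\f_i)$ where the $\f_i$ are the irreducible constituents. For the reverse inclusion, each irreducible constituent appears as a subquotient and stability of ULA under subquotients gives the corresponding containment of singular supports.

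For part (ii), writing $h: X\to Y$ for the smooth morphism, the key reduction is to observe that a test pair $X\xleftarrow{a} U\xrightarrow{b} W$ is $h^\circ SS(\f)$-transversal precisely when the composed pair $Y\xleftarrow{h\circ a} U\xrightarrow{b} W$ is $SS(\f)$-transversal; this follows from the surjectivity of $dh$ (as $h$ is smooth) together with the definition of $h^\circ C$. The inclusion $SS(h^*\f)\subseteq h^\circ SS(\f)$ then follows because ULA is preserved under smooth pullback, by factoring acyclicity of $(a,b)$ for $h^*\f$ through acyclicity of $(h\circ a, b)$ for $\f$. The reverse inclusion is more delicate and uses the minimality in Definition \ref{ess}: any strictly smaller conical subset of $h^\circ SS(\f)$ on which $h^*\f$ is microsupported would descend via $h$ to a strictly smaller microsupport for $\f$, contradicting minimality.

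For part (iii), given a proper $r: X\to Z$ and an $r_\circ SS(\f)$-transversal test pair $(a,b)$ for $Z$, the strategy is to form the base change $X_U := X\times_Z U$ with projections $a': X_U\to X$ and $r': X_U\to U$, and to verify that the resulting test pair $(a', b\circ r')$ for $X$ is $SS(\f)$-transversal, which is a direct unraveling of the definitions of $r_\circ C$ and the relevant differentials. By minimality of $SS(\f)$, this base-changed pair is $\f$-acyclic, so $b\circ r'$ is ULA with respect to $(a')^*\f$. Since $r$ is proper, proper base change gives $a^* r_*\f\cong r'_*(a')^*\f$, and stability of ULA under proper pushforward along $r'$ yields that $b$ is ULA for $a^* r_*\f$, which proves the claim.

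The main obstacle is the transversality verification in (iii): one must carefully track the two projections from $T^*Z\times_Z X$ and the image construction defining $r_\circ$ to confirm $SS(\f)$-transversality of the base-changed test pair. Nevertheless, the entire proposition is essentially contained, for $\ell$-adic sheaves, in \cite[\S 1.3]{Barrett2023} and, for torsion coefficients, in \cite[\S 1]{Beilinson2016}, so in an exposition the quickest route would be to cite those references after sketching the above.
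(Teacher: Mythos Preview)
The paper's own proof consists solely of citations: (i) is \cite[Theorem~1.5(viii)]{Barrett2023}, (ii) is \cite[Theorem~1.5(x)]{Barrett2023} together with \cite[Lemma~2.2]{Beilinson2016}, and (iii) is \cite[Lemma~4.4(ii)]{Barrett2023}. Your final sentence recommends exactly this route, so at the level of what the paper actually does, you agree with it.

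Where you go beyond the paper is in the sketches, and there two points deserve tightening if you intend them to stand on their own. In (i), the reverse inclusion is phrased as ``stability of ULA under subquotients''; this is not a general black-box property of universal local acyclicity. The argument in \cite{Beilinson2016,Barrett2023} instead passes through the characterisation of micro\-support by vanishing cycles and uses that the vanishing-cycle functor is $t$-exact for the perverse $t$-structure, so that vanishing of $\phi$ for $\f$ forces vanishing for every perverse subquotient. In (ii), your ``descent'' step for the inclusion $h^{\circ}SS(\f)\subseteq SS(h^*\f)$ is too vague: an arbitrary closed conical $D\subset T^*X$ on which $h^*\f$ is microsupported need not be of the form $h^{\circ}C$, so there is nothing to descend directly. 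The clean argument (as in \cite[Lemma~2.2]{Beilinson2016}) is to show the equivalence ``$\f$ is microsupported on $C$ iff $h^*\f$ is microsupported on $h^{\circ}C$'' by reducing \'etale-locally to a product projection, or alternatively by lifting test pairs along the smooth surjection and using faithfully flat descent of ULA. Your sketch for (iii) is correct and is essentially the content of \cite[Lemma~4.4(ii)]{Barrett2023}.
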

	\begin{proof}
		The first property is \cite[Theorem 1.5.(viii)]{Barrett2023}.
		The second property is \cite[Theorem 1.5.(x)]{Barrett2023}, see also \cite[Lemma 2.2]{Beilinson2016}.
		
		The third property follows from \cite[Lemma 4.4.(ii)]{Barrett2023} since $\f$ is microsupported on $SS(\f)$ by definition.
	\end{proof}
	
	\begin{remark}
		The first property is more generally true for $SS(\f)$-transversal morphisms, but a smooth morphism is transversal to any closed conical subset. The second property is true when the base of $SS(\f)$ is proper over $Z$. 
		
		The inequality in the second property is strict: For an example, consider the pushforward of the constant sheaf on $\mathbb{A}^1$ by the Frobenius.
	\end{remark}
	
	\subsection{Singular support of induction}
	
	To prove Theorem \ref{main}, we also need the equivalent of \cite[Lemma 1.2]{Mirkovic1988} for characteristic $p$. 
	
	Let $A$ be a connected algebraic group acting on a variety $X$. For any connected subgroup $B$, consider the following commutative diagram.
	\[
	\begin{tikzcd}
		A\times X \arrow[r,"\nu"]\arrow[d,"a"] & A/B\times X\arrow[d,"p"]\\
		X & X
	\end{tikzcd}
	\]
	given by
	\[
	\begin{tikzcd}
		(a,x) \arrow[r,"\nu"]\arrow[d,"a"] & (aB,x)\arrow[d,"p"]\\
		a^{-1}\cdot x & x.
	\end{tikzcd}
	\]
	For a $B$-equivariant constructible sheaf $\f\in D_B(X)$, there exists a unique $\tilde{\f}\in D_A(A/B\times X)$ such that $a^*\f=\nu^*\tilde{\f}$. 
	
	\begin{definition}\label{ind2}
		The functor $\Gamma_B^A : D_B(X)\maps D_A(X)$ defined by $\f=p_*\tilde{\f}\in D_A(X)$ is called the {\em induction functor}.
	\end{definition}
	
	\begin{lemma}\label{indbound}
		For $\f\in D_B(X)$, we have 
		$$SS(\Gamma_B^G \f)\subseteq G \cdot SS(\f).$$
	\end{lemma}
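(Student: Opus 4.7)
The plan is to combine Proposition~\ref{ssprop}(ii) and (iii) with a direct computation of the cotangent maps in the correspondence defining $\Gamma_B^G$.

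First, in the setting of Theorem~\ref{main} the subgroup $B$ is a Borel, so $G/B$ is proper and hence $p\colon G/B\times X\to X$ is proper. Proposition~\ref{ssprop}(iii) gives
\[
SS(\Gamma_B^G\f)=SS(p_*\tilde{\f})\subseteq p_\circ SS(\tilde{\f}),
\]
reducing the claim to the inclusion $p_\circ SS(\tilde{\f})\subseteq G\cdot SS(\f)$.

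Next I identify $SS(\tilde{\f})$ in terms of $SS(\f)$. The morphism $\nu$ is a $B$-torsor, hence smooth. The morphism $a$ factors as $a=\pi_X\circ\phi$, where $\phi(g,x)=(g,g^{-1}x)$ is an automorphism of $G\times X$ and $\pi_X$ is the projection; therefore $a$ is smooth as well. Applying Proposition~\ref{ssprop}(ii) to both sides of the defining relation $a^*\f=\nu^*\tilde{\f}$,
\[
\nu^\circ SS(\tilde{\f})=SS(\nu^*\tilde{\f})=SS(a^*\f)=a^\circ SS(\f).
\]
Since $\nu$ is smooth and surjective, $d\nu^*$ is injective and sends $(\eta_1,\eta_2)\mapsto(d\pi_{G/B}^*\eta_1,\eta_2)$, so the equality above determines $SS(\tilde{\f})$ pointwise.

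The core of the argument is now a fiberwise computation. By the definition of $p_\circ$, a covector $\eta\in T_x^*X$ lies in $(p_\circ SS(\tilde{\f}))_x$ if and only if $(0,\eta)\in SS(\tilde{\f})_{(gB,x)}$ for some $g\in G$; under $d\nu^*$ this is equivalent to $(0,\eta)\in(a^\circ SS(\f))_{(g,x)}$. Decomposing $da_{(g,x)}$ along the two factors of $G\times X$ and tracking the inversion in $a$, a direct calculation yields
\[
(a^\circ SS(\f))_{(g,x)}=\bigl\{(d\alpha_x^*\xi,\;dL_{g^{-1}}^*\xi):\xi\in SS(\f)_{g^{-1}x}\bigr\},
\]
where $\alpha_x(h)=h^{-1}x$ and $L_g(y)=gy$. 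The second component then gives $\eta=dL_{g^{-1}}^*\xi$ for some $\xi\in SS(\f)_{g^{-1}x}$, and since $dL_{g^{-1}}^*$ is precisely the induced $G$-action $\xi\mapsto g\cdot\xi$ on $T^*X$, one concludes $\eta\in G\cdot SS(\f)$.

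The main obstacle is essentially bookkeeping: keeping track of the inversion built into $a$, the left/right conventions for the action of $B$ on $G$ used to define $\nu$, and the identification of $d\nu^*$ on cotangents in terms of the vertical direction. Once these conventions are pinned down, the substance of the proof is the compact formula for $a^\circ SS(\f)$ above, whose second component realizes the $G$-action on $T^*X$ with no residual terms, forcing the desired containment.
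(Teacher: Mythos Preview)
Your argument is correct and follows essentially the same route as the paper: both apply Proposition~\ref{ssprop}(iii) to the proper map $p$ (using that $G/B$ is complete), then use Proposition~\ref{ssprop}(ii) for the smooth maps $a$ and $\nu$ to identify $\nu^\circ SS(\tilde\f)=a^\circ SS(\f)$, and finally compute that the $X$-component of $a^\circ SS(\f)$ is $G\cdot SS(\f)$. The only difference is presentational: the paper packages the last step via the second projection $pr_2\colon T^*(G\times X)\to T^*X$ and the observation that $p_\circ$ factors through $pr_2$, citing \cite{Mirkovic1988}, whereas you compute $da^*$ explicitly and read off that the second component is $dL_{g^{-1}}^*\xi$.
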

	
	\begin{proof}
		Let $pr_2$ be the second projection from either $T^*(G\times X), T^*(G/B\times X)$ to $T^*X$ as in \cite[Proof of Lemma 1.2]{Mirkovic1988}. Observe that by definition $p_{\circ}=pr_2$. By Proposition \ref{ssprop}.(ii), $pr_2(SS(a^*\f))=G\cdot SS(\f)$, as in \emph{loc. cit.}
		
		By Proposition \ref{ssprop}.(ii) and the above observation, $p_{\circ}(SS(\tilde{\f}))=pr_2(SS(\nu ^*\tilde{\f}))=pr_2(SS(a^*\f))=G\cdot SS(\f)$
		
		We have that $G/B$ is a flag variety and therefore proper, so $p$ is proper being the base change of a proper morphism. By Proposition \ref{ssprop}.(iii), $$SS(\Gamma_B^G \f)=SS(p_* \tilde{\f})\subseteq p_{\circ}(SS(\tilde{\f}))=G\cdot SS(\f).$$ 		
	\end{proof}
	
	\begin{remark}
		There is no need to consider the closure as in \cite[Lemma 1.2]{Mirkovic1988}, because we have restricted to the case $G/B$ is already proper.
	\end{remark}
	
	\section{Tame perverse sheaves}
	
	\subsection{Definition of a tame perverse sheaf}
	
	Let $X$ be a smooth variety over an algebraically closed field $k$. All sheaves we consider are assumed to be $l$-adic for $l\neq \cha(k)$.  
	
	We denote by the letter $E$ any constant sheaf. Let $U\subseteq X$ be a locally closed subset. We recall a well-known definition.
	
	\begin{definition}\label{fm}
		A local system $\ls$ on $U$ has finite monodromy if there exists a finite \'etale covering map $f:\tilde{U}\rightarrow U$ such that $f^* \ls=E$.
	\end{definition}
	
	We also recall the following \cite[\S 1]{Kerz2010}.
	
	\begin{definition}
		Let $\bar{C}$ be a proper, connected and regular curve of finite type over $\Spec(k)$ and $C\subseteq \bar{C}$ an open subscheme. Every point $x\in \bar{C}\setminus C$ defines a valuation $v_x$ on $k(C)$. An \'etale covering of curves $C'\rightarrow C$ is called {\em tame} if for every $x\in \bar{C}\setminus C$ the valuation $v_x$ is tamely ramified in $k(C')\mid k(C).$
	\end{definition}
	
	Finally, we recall one of the several equivalent definitions for tameness of an \'etale covering in higher dimensions in \cite[\S 4]{Kerz2010}, see also \cite[Definition 5.1]{Huebner2021}. 
	
	\begin{definition}\label{tc}
		An \'etale covering $Y\rightarrow X$ is called {\em tame} if for every morphism $C\rightarrow X$ with $C$ regular, the base change $Y\times_X C\rightarrow C$ is tame.
	\end{definition}
	
	\begin{definition}\label{stcd}
		Let $X=\Spec(A)$ be an smooth variety of dimension $n$ and $\pi_1,\ldots,\pi_n$ a local system of parameters around a point $x\in X$ such that $D=V(\pi_1)\cup \ldots \cup V(\pi_n)$ is a simple normal crossings divisor. Let $X'=X\setminus D\cong \Spec A'$. 
		
		We call a covering of the form 
		$$f: \Spec A'[T_1,\ldots, T_n]/(T_1^{k_1}-\pi_1,\ldots, T_n^{k_n}-\pi_n)\rightarrow \Spec A'$$
		such that $p\nmid k_i$ for any $i=1,\ldots n$ a {\em standard tame covering.}
	\end{definition}
	
	\begin{example}\label{stc}
		Standard tame coverings are tame \'etale coverings. For the case of the complement of a simple normal crossings divisor, these are essentially all of them, see \cite[Theorem 4.4]{Kerz2010} and \cite[Expose XIII, Proposition 5.1]{Grothendieck1971}.
	\end{example}
	
	The following definitions are motivated by Definition \ref{tc}.
	
	\begin{definition}\label{tls}
		A local system $\ls$ with finite monodromy will be called {\em tame} if the \'etale covering $f: \tilde{U}\rightarrow U$ in Definition \ref{fm} can be taken to be tame. 
	\end{definition}
	
	\begin{definition}\label{tps}
		An irreducible perverse sheaf $IC(U,\ls)$ on a smooth variety $X$ is called {\em tame} if $\ls$ is a tame local system on $U$. A perverse sheaf on $X$ is called {\em tame} if all its irreducible constituents are tame.
	\end{definition}

	\subsection{Properties}
	
	In this subsection we show properties of tameness that will be used later.
	
	\begin{lemma}\label{abel}
		The category $\Perv_{\tame}(X)\subseteq \Perv(X)$ consisting of the tame perverse sheaves and the morphisms between them is an abelian subcategory of the category of perverse sheaves and is stable under extensions.
	\end{lemma}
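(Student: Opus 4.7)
The plan is to derive the lemma from Jordan--Hölder in $\Perv(X)$ together with the observation (Definition \ref{tps}) that tameness of a perverse sheaf depends only on the multiset of its irreducible constituents. Since $\Perv(X)$ is Artinian and Noetherian, every object $\f$ admits a finite composition series, and the multiset of simple subquotients is independent of the series chosen. For a short exact sequence $0\to \f'\to \f\to \f''\to 0$ in $\Perv(X)$, the constituents of $\f$ are the union, with multiplicities, of the constituents of $\f'$ and those of $\f''$.

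Given this, the closure properties one needs are formal. Stability under extensions is immediate: if $\f'$ and $\f''$ are tame, then every constituent of $\f$ is a constituent of $\f'$ or of $\f''$, hence tame. For the kernel, image, and cokernel of a morphism $\phi\colon \f\to \cG$ between tame perverse sheaves, each such object is a subobject or a quotient of $\f$ or $\cG$, so its constituent multiset is a submultiset of the constituents of $\f$ or $\cG$, all of whose members are tame. Closure under finite direct sums is clear from the same observation. Together these imply that $\Perv_{\tame}(X)$ is a full abelian subcategory of $\Perv(X)$ whose inclusion is exact, and that it is stable under extensions.

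The only preliminary point to verify is that the property of being tame is well defined on isomorphism classes of simple perverse sheaves, i.e., does not depend on the presentation $IC(U,\ls)$. This holds because every simple perverse sheaf arises from a unique pair $(U,\ls)$, with $U\subseteq X$ an irreducible smooth locally closed subvariety (maximal with the property that the sheaf restricts there to a shifted local system) and $\ls$ an irreducible local system on $U$, so the condition that $\ls$ be tame is intrinsic to the simple perverse sheaf. I do not expect a substantive obstacle; the argument is essentially a bookkeeping exercise once the Jordan--Hölder property of $\Perv(X)$ is invoked.
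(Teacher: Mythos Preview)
Your proposal is correct and is precisely the approach the paper takes: the paper's entire proof is the single line ``Trivial by checking on the level of irreducible constituents,'' and you have simply unpacked what that means via Jordan--H\"older. Your added remark on well-definedness of tameness for a simple object (independence of the presentation $IC(U,\ls)$) is a reasonable sanity check that the paper leaves implicit.
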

	
	\begin{proof}
		Trivial by checking on the level of irreducible constituents.
	\end{proof}
	
	The following is in \cite{Kerz2010}, but we provide a proof for completeness. 
	
	\begin{lemma}\label{tambc}
		Tameness of an \'etale covering is stable under arbitrary base change.
	\end{lemma}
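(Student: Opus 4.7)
The plan is to unwind Definition \ref{tc} directly and exploit associativity of fiber products; there is essentially no content beyond this.

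Suppose $Y \to X$ is a tame \'etale covering and $X' \to X$ is an arbitrary morphism; write $Y' := Y \times_X X'$, so we have the base-changed \'etale covering $Y' \to X'$. To verify tameness of $Y' \to X'$ according to Definition \ref{tc}, I would pick an arbitrary regular curve $C$ together with a morphism $g: C \to X'$ and show that $Y' \times_{X'} C \to C$ is tame (in the sense of the preceding definition for curve coverings).

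The key step is to form the composite morphism $C \xrightarrow{g} X' \to X$. Since $Y \to X$ is tame and $C$ is a regular curve mapping to $X$, Definition \ref{tc} applied to this composite yields that $Y \times_X C \to C$ is a tame \'etale covering of curves. Then I would invoke associativity of fiber products,
\[
Y \times_X C \;\cong\; Y \times_X X' \times_{X'} C \;=\; Y' \times_{X'} C,
\]
to conclude that $Y' \times_{X'} C \to C$ is tame. As $C$ and $g$ were arbitrary, this verifies the criterion of Definition \ref{tc} for $Y' \to X'$.

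The only thing worth flagging is that the result needs no hypothesis on $X'$ (in particular $X'$ need not be smooth or regular), since Definition \ref{tc} tests tameness by pulling back to regular curves rather than imposing conditions on the base itself; so the main potential obstacle, ensuring that the test diagrams on $X'$ can be matched with test diagrams on $X$, is resolved immediately by composing with $X' \to X$. Thus the proof is a one-line diagram chase once the definitions are set up.
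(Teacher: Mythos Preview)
Your proof is correct and is essentially identical to the paper's own argument: both reduce the test curve $C \to X'$ to a test curve $C \to X$ by composition, then use the associativity isomorphism $Y' \times_{X'} C \cong Y \times_X C$ to invoke tameness of $Y \to X$. The paper simply displays the same diagram (with $Z$ in place of your $X'$) and draws the same conclusion.
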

	
	\begin{proof}
		Let $f:Y\rightarrow X$ be a tame \'etale covering and $g:Z\rightarrow X$ a morphism. Then for an arbitrary morphism $C\rightarrow Z$ where $C$ is a regular curve, we get the following diagram
		\[
		\begin{tikzcd}
			Y\times_X Z\times_Z C \arrow[r]\arrow[d]& C\arrow[d]\\
			Y\times_X Z\arrow[r]\arrow[d] & Z\arrow[d]\\
			Y\arrow[r] & X
		\end{tikzcd}	
		\]
		It is enough to prove that the upper morphism of curves is tame. This follows by $Y\times_X Z\times_Z C\cong Y\times_X C$ and the tameness of $Y\rightarrow X$.
	\end{proof}
	
	For a smooth morphism $f:X\rightarrow Y$, we denote by $f^{\dagger}$ the functor $f^*[d]$ as in \cite{Achar2021}. We remark that Mirkovic and Vilonen use $f^{\circ}$ for the same functor. 
	
	\begin{lemma}\label{tampb}
		Let $f:X\rightarrow Y$ be a smooth morphism with connected fibers, and $\f$ a tame perverse sheaf on $Y$. Then $f^{\dagger} \f$ is tame.
	\end{lemma}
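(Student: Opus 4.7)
The plan is to reduce to the case of irreducible constituents and then base-change the trivializing étale covering. First, by Lemma \ref{abel}, tameness of $f^{\dagger}\f$ can be checked on its irreducible constituents, so I may assume $\f=IC(U,\ls)$ with $j\colon U\hookrightarrow Y$ a smooth locally closed immersion and $\ls$ a tame local system on $U$. Since $f$ is smooth, $f^{-1}(U)$ is smooth and the functor $f^{\dagger}=f^*[d]$ is perverse t-exact and commutes with intermediate extensions along open immersions; this is standard (see e.g.\ \cite{Achar2021}), and gives
\[
f^{\dagger} IC(U,\ls) \;=\; IC\bigl(f^{-1}(U),\;f^*\ls\bigr).
\]
Writing $f^{*}\ls = \ls_1 \oplus \cdots \oplus \ls_r$ as a sum of irreducible local systems on $f^{-1}(U)$, the irreducible constituents of $f^{\dagger}\f$ are the sheaves $IC(f^{-1}(U),\ls_i)$, so it suffices to prove that each $\ls_i$ is tame.

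For this, I would invoke Definition \ref{tls} to pick a tame étale covering $g\colon \tilde U\to U$ with $g^*\ls$ constant, and form the base change $\tilde g\colon \tilde U \times_U f^{-1}(U)\to f^{-1}(U)$. The map $\tilde g$ is an étale covering as a base change of $g$, and it is tame by Lemma \ref{tambc}. Moreover $\tilde g^{*}f^{*}\ls$ agrees with the pullback of $g^{*}\ls$ along the other projection, hence is constant. Thus $f^{*}\ls$ is trivialized by the tame covering $\tilde g$, and any direct summand of a local system trivialized by $\tilde g$ is again trivialized by $\tilde g$. So each $\ls_i$ is tame, which would complete the proof.

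The main point of care is the identification $f^{\dagger}IC(U,\ls)=IC(f^{-1}(U),f^*\ls)$: the connected-fibers hypothesis enters here, ensuring that $f^{-1}(U)$ is the correct open stratum on which to perform the intermediate extension (so there are no extraneous components contributing to the constituents). Once this standard input is in hand, the rest of the argument is a direct application of Lemma \ref{tambc} to propagate tameness from the cover of $U$ to the cover of $f^{-1}(U)$.
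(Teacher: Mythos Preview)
Your proof is correct and follows essentially the same route as the paper: reduce to an irreducible $\f=IC(U,\ls)$, identify $f^{\dagger}\f$ with $IC(f^{-1}(U),f^*\ls)$, and then base-change the trivializing tame cover via Lemma~\ref{tambc}. The only cosmetic difference is that the paper invokes \cite[Theorem~3.6.6]{Achar2021} to conclude that $f^{\dagger}\f$ is already irreducible (this is precisely where the connected-fibers hypothesis is used), whereas you allow $f^*\ls$ to split and check tameness of each summand---a harmless extra step.
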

	
	\begin{proof}		
		Since $f^{\dagger}$ is t-exact by \cite[\S 4.2.4]{Beilinson1982}, we can assume $\f$ is irreducible. We write it as $\f=IC(Y_0,\ls)$. Since smoothness is preserved by arbitrary base change, we can also restrict $f$ to the preimage $f^{-1}(\overline{Y_0})$. 
		
		Then, $f^{\dagger}\f\cong IC(f^{-1}(Y_0),f^*\ls)$, and irreducible by \cite[Theorem 3.6.6]{Achar2021}. For $\ls$ by Definition \ref{tls} there must be a tame \'etale covering $g:U\rightarrow Y_0$ such that $g^* \f=E.$ Therefore, by the diagram
		\[
		\begin{tikzcd}
			U\times_{Y_0} X \arrow[r,"g'"]\arrow[d,"f'"]& X\arrow[d,"f"]\\
			U\arrow[r,"g"] & Y_0
		\end{tikzcd}	
		\]
		since by compatibility of pullbacks with composition, $g'^* f^*\ls=f'^*g^*\ls=f'^* E=E$, $g'$ is an \'etale covering trivializing $f^*\ls$. By Lemma \ref{tambc}, $g'$ is also tame, therefore $f^{\dagger}\f$ is tame.
	\end{proof}
	
	Notice that, in general, pushforwards do not preserve tameness even under very strong assumptions.
	
	\begin{example}\label{pushex}
		Consider a projective wildly ramified covering of curves $f:X\rightarrow Y$. This decomposes as $X\xrightarrow[]{i} Y \times \pp^1 \xrightarrow[]{p} Y$. $\f=i_*E$ is a tame sheaf, while $p_* \f=f_*E$ is not. We remark that $p$ is a smooth proper map.
	\end{example}
	
	On a torus, tame local systems coincide with Kummer local systems. Indeed, for $n\in \mathbb{N},$ let $n:T\rightarrow T$ be the $n$-th power isogeny $n(t)=t^n$.
	
	\begin{lemma}\label{tameq}
		For a torus $T$, a local system $\ls$ is tame if and only if $n^*\ls\cong E$ for some $n$ coprime to $p$.
	\end{lemma}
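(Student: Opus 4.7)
The plan is to establish each direction separately. For the \enquote{if} direction, suppose $n^*\ls\cong E$ with $\gcd(n,p)=1$. Then the isogeny $n:T\to T$ is a finite étale cover that trivializes $\ls$, so it suffices to check that $n$ is tame. Identifying $T\cong \mathbb{G}_m^r$ and compactifying by the toric compactification $T\hookrightarrow (\pp^1)^r$, the boundary is a simple normal crossings divisor whose components are locally cut out by coordinate parameters $\pi_1,\dots,\pi_r$, and $n$ is locally the extraction of $n$-th roots of these. Since $\gcd(n,p)=1$, this is a standard tame cover in the sense of Definition \ref{stcd}, hence tame by Example \ref{stc}, so $\ls$ is tame.

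For the \enquote{only if} direction, assume $\ls$ is tame. Choose a tame étale cover $f:\tilde{U}\to T$ trivializing $\ls$ and pass to its Galois closure, which remains tame (using Lemma \ref{tambc} and that the Galois closure is obtained as a component of a fibered product). The resulting Galois group $G$ is finite of order coprime to $p$, and the monodromy representation $\rho$ of $\ls$ factors through $G$. The crucial input is the structure of the tame fundamental group of a torus: $\pi_1^{\tame}(T)\cong \hat{\mathbb{Z}}^{(p')}(1)^r$ where $r=\dim T$, and the Kummer isogenies $n:T\to T$ for $\gcd(n,p)=1$ correspond to the open subgroups $n\cdot \pi_1^{\tame}(T)$, forming a cofinal system among tame finite Galois covers. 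The case $r=1$ is classical (every tame cover of $\mathbb{G}_m$ is a Kummer cover), and the general case follows from the structure of tame fundamental groups of complements of SNC divisors as in \cite[Expose XIII, Proposition 5.1]{Grothendieck1971}, applied to the boundary of $(\pp^1)^r$. Granting this, and noting that $\pi_1^{\tame}(T)$ is abelian so $\mathrm{Im}(\rho)$ is a finite abelian group of order prime to $p$, let $n$ be its exponent. Then $\rho$ vanishes on $n\cdot \pi_1^{\tame}(T)$, which is exactly the statement $n^*\ls\cong E$.

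The main obstacle is the structure of $\pi_1^{\tame}(T)$ and the cofinality of the Kummer isogenies among tame Galois covers. Once this is in place, the rest is straightforward representation theory of finite abelian groups. A slightly different route would be to semisimplify $\rho$ into a direct sum of characters (automatic in characteristic zero), identify each character with a Kummer sheaf of some order $n_i$ coprime to $p$, and take $n=\mathrm{lcm}(n_i)$; this bypasses an explicit description of $\pi_1^{\tame}(T)$ but still requires that every tame rank-one local system on $T$ is Kummer.
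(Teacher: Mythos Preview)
Your proof is correct and rests on the same essential input as the paper's argument, namely the structure of tame covers of a simple normal crossings complement from \cite[Expos\'e XIII]{Grothendieck1971} (invoked in the paper via Example~\ref{stc}). The paper's route is more direct: writing $T=\Spec k[x_1^{\pm},\ldots,x_r^{\pm}]=\mathbb{A}^r\setminus\{x_1\cdots x_r=0\}$, it observes that any tame cover trivializing $\ls$ may be taken to be a standard tame cover $t_i^{n_i}=x_i$ with $(n_i,p)=1$, and then the Kummer isogeny for $n=\prod n_i$ factors through it, giving $n^*\ls\cong E$ immediately. Your version instead passes through the Galois closure and the explicit description $\pi_1^{\tame}(T)\cong\hat{\bZ}^{(p')}(1)^r$; this is fine, but it is a detour, since both the tameness of the Galois closure and the claim that its group has order prime to $p$ ultimately unwind to the same SGA1 structure result you then invoke anyway. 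One minor quibble: to conclude that the Galois closure remains tame you need, in addition to stability under base change (Lemma~\ref{tambc}), stability of tameness under composition, which is true but is not what Lemma~\ref{tambc} asserts.
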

	
	\begin{proof}
		Let $R=k[x_1^{\pm},\ldots,x_k^{\pm}]$ and $T=\Spec R$.
		Since $n^*\ls=E$, $\ls$ trivializes under the covering $n:\Spec R[t_1,\ldots, t_k]/(t_1^n-x_1,\ldots, t_k^n-x_k)\rightarrow \Spec R,$ which is a standard tame covering since $(n,p)=1$, see Definition \ref{stcd}. 
		
		If $\ls$ is tame, then it must trivialize under some tame covering $f$. Since $T=\mathbb{A}^n\setminus D$ where $D=\{x_1=0,\ldots, x_n=0\}$, $f$ can be taken to be a standard tame covering by Example \ref{stc}, therefore of the form $f:\Spec R[t_1,\ldots, t_k]/(t_1^{n_1}-x_1,\ldots, t_k^{n_k}-x_k),$ where $(n_i,p)=1$ for all $i=1,\ldots, k.$ Defining $n=\prod n_i$, we see that we can write $n$ as a composition $g\circ f$, and therefore $n$ also trivializes $\ls$, since $n^*\ls=(g\circ f)^* \ls=g^*E=E$. 
	\end{proof}
	
	We show that conormality of the singular support still holds in characteristic $p$ for tame perverse sheaves upon conditions on the geometry of the ramification divisor.
	
	\begin{definition}
		Let $X$ be a variety and $D\subset X$ a closed subvariety. A map $f:\tilde{X}\rightarrow X$ will be a called a uniform resolution of $D$ if
		\begin{enumerate}
			\item $\tilde{X}$ is smooth and $f$ is proper and birational.
			\item $f^{-1}(D)$ is a simple normal crossings divisor in $\tilde{X}$.
			\item $f$ is a stratified submersion on tangent spaces for the stratification $D=\bigsqcup_{i\in I} D_i$ by smooth strata of codimension $i$. 
		\end{enumerate}
	\end{definition}

	\begin{lemma}\label{conormality}
		Let $X$ be a smooth variety over an algebraically closed field $k$. If $\f$ is a tame perverse sheaf on $X$ such that for every irreducible constituent $IC(U,\ls)$ we have that $D:=\bar{U}\setminus U$ has a uniform resolution, then $SS(\f)$ is conormal to its base.
	\end{lemma}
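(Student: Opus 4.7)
The plan is to reduce to a single irreducible tame IC sheaf, transport the question to the uniform resolution where the ramification divisor becomes simple normal crossings, run the local computation there via standard tame coverings, and push the resulting conormal structure back down using that the resolution is a stratified submersion.

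By Proposition \ref{ssprop}.(i) I reduce to $\f = IC(U,\ls)$ irreducible and pick a uniform resolution $f:\tilde X\to X$ of $D := \bar{U}\setminus U$. Let $\tilde U := f^{-1}(U)$ and $\tilde D := f^{-1}(D)$, which is SNC. Since $f$ is an isomorphism over $U$, the local system $f^*\ls$ on $\tilde U$ is tame by Lemma \ref{tambc}, and $\tilde\f := IC(\tilde U, f^*\ls)$ is a tame perverse sheaf on $\tilde X$ whose ramification is concentrated along the SNC divisor $\tilde D$. Because $f$ is proper birational and an isomorphism over $U$, $\f$ appears as a perverse summand of $f_*\tilde\f$ (the remaining summands being supported on $D$), so Proposition \ref{ssprop}.(iii) gives $SS(\f)\subseteq SS(f_*\tilde\f) \subseteq f_\circ SS(\tilde\f)$. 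This reduces the problem to (a) showing $SS(\tilde\f)$ is contained in the union of conormal bundles to the smooth strata of $\tilde D$ on $\tilde X$, and (b) verifying that $f_\circ$ sends conormals to $\tilde D$-strata into conormals to $D$-strata.

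For step (a), I work Zariski-locally around a point of $\tilde D$. By Example \ref{stc}, tame coverings of the complement of an SNC divisor are quotients of standard tame coverings, so $f^*\ls$ is locally a summand of the pushforward of a constant sheaf by some standard tame cover $g:\Spec A'[T_1,\ldots,T_n]/(T_i^{k_i}-\pi_i) \to \Spec A'$ with each $k_i$ coprime to $p$. Extending $g$ to the finite, generically étale morphism $\bar g$ on the ambient $\Spec A$ and applying Proposition \ref{ssprop}.(i), (iii), it suffices to compute $\bar g_\circ$ of the zero section of $T^*\bar W$. In coordinates, the relation $d\pi_i = k_i T_i^{k_i-1}dT_i$ shows that the Jacobian of $\bar g$ drops rank exactly along $T_i=0$, and for a subset $S \subseteq \{1,\dots,n\}$ the cokernel of the tangent map at a point where $T_i=0$ for $i\in S$ is dual to $\mathrm{span}\{d\pi_i : i\in S\}$, so $\bar g_\circ$ of the zero section is the union $\bigcup_{S}T^*_{Z_S}\tilde X$ where $Z_S := \bigcap_{i\in S} V(\pi_i)$ ranges over strata of $\tilde D$. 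Step (b) is then immediate from the stratified submersion property in the definition of a uniform resolution: on each stratum $Z$ of $\tilde D$ mapping to a stratum $D_i$, the differential of $f|_Z$ is surjective onto $TD_i$, and a direct unraveling of the definition of $f_\circ$ yields $f_\circ T^*_Z \tilde X\subseteq T^*_{D_i}X$.

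The main obstacle is the coordinate calculation in step (a): one must explicitly identify $\bar g_\circ$ of the zero section with the union of conormal bundles to the SNC strata, and verify compatibility with the decomposition of $\bar g_* E$ into its Kummer character summands, which is the concrete multivariate computation replacing the characteristic-zero conormality of holonomic $D$-modules. A secondary subtlety, invisible in characteristic zero, is justifying that $\f$ is in fact a summand of $f_*\tilde\f$ (or at least that $SS(\f) \subseteq SS(f_*\tilde\f)$) in the $l$-adic setting; this should follow either from semisimplicity of the relevant IC sheaves (their local systems factor through a finite quotient and so are semisimple over $\overline{\mathbb Q}_l$) together with the Decomposition Theorem, or by arguing directly from the fact that both sheaves restrict to $\ls[\dim U]$ on $U$.
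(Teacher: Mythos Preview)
Your overall architecture matches the paper's: reduce to an irreducible tame IC, pass to the uniform resolution where the boundary becomes SNC, establish conormality upstairs, and push down via $f_\circ$ using the stratified-submersion condition. The paper streamlines two of your steps. First, instead of $\tilde\f = IC(\tilde U, f^*\ls)$ it works with $\cG := \tilde j_! g^*\ls$ and observes (after shrinking $U$ to the locus where $f$ is an isomorphism) that $j_!\ls \cong f_*\cG$ directly by proper base change; since $IC(U,\ls)$ is a perverse constituent of $j_!\ls$, Proposition \ref{ssprop}(i),(iii) give $SS(\f)\subseteq f_\circ SS(\cG)$ with no appeal to the Decomposition Theorem, completely dissolving the ``secondary subtlety'' you flag. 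Second, for your step (a) the paper simply invokes \cite[Lemma 3.3]{Saito2016}, which already asserts that a tame sheaf ramified along an SNC divisor has singular support contained in the union of conormals to the strata; your explicit Kummer-cover Jacobian computation is correct and is essentially what underlies Saito's lemma, but is not needed here. Your step (b) is the paper's closing argument, phrased as a direct inclusion $f_\circ T^*_Z\tilde X\subseteq T^*_{D_i}X$ rather than the paper's contrapositive with a tangent vector $v$ and its lift $v'$.
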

	
	\begin{proof}
		Assume without loss of generality that $\f:=IC(U,\ls)$ is irreducible. If $D$ is already a simple normal crossings divisor, the assertion follows from \cite[Lemma 3.3]{Saito2016}.
		
		For the general case, shrink $U$ by intersecting with the open locus where $f$ is an isomorphism. Consider the diagram
		
		\[
		\begin{tikzcd}
			f^{-1}(U) \arrow[r,"\tilde{j}"]\arrow[d,"g"]& \tilde{X}\arrow[d,"f"]\\
			U\arrow[r,"j"] & X
		\end{tikzcd}	
		\]
		where $j,\tilde{j}$ are inclusions and $g$ is the restriction of $f$ to $f^{-1}(U)$.
		
		We define $\cG:=\tilde{j}_!g^*\ls,$ and notice that $$j_!\ls\cong f_*f^*j_!\ls\cong f_*\cG$$ 
		where the first isomorphism is true because $f$ is an isomorphism over $U$, and the second by base change. Therefore, by \cite[Lemma 2.2]{Beilinson2016}
		
		$$SS(j_!\ls)= SS(f_*\cG)\subseteq f_{\circ}SS(\cG),$$
		and $SS(\cG)$ is conormal since $f^{-1}(D)$ is a simple normal crossings divisor and $g^*\ls$ is tame.
		
		Assume $SS(\f)$ was not conormal. Then there exists $z\in D_i$ and $w$ not conormal to $D_i$, such that $w\in SS(\f).$ Therefore, there exists $v$ tangent to $D_i$ such that $\langle w,v\rangle\neq 0.$ By property (3), there exists $v'$ tangent to $f^{-1}(D_i)$ such that $\langle df(w),v'\rangle\neq 0,$ and by conormality of $SS(\cG)$ we have a contradiction.
	\end{proof}
	
	\begin{remark}
		For the case of character sheaves treated in this paper we need conormality for the Bruhat stratification, which admits the Bott-Samelson resolution. In particular, let $w\in W$ an element of the Weyl group and $Y_w=\overline{BwB}$. Choose a reduced expression $w=s_1\ldots s_n.$ Then the Bott-Samelson resolution is
		$$Y_{s_1}\times_B \cdots \times_B Y_{s_n}\rightarrow Y_w$$ 
		The Bott-Samelson resolution is uniform by $B$-equivariance.
	\end{remark}
	
	\begin{remark}
		For a counterexample to conormality when $\f$ is not tame, see \cite[Example 1.6]{Beilinson2016}.
	\end{remark}
	
	\section{Proof of the main theorem}
	
	We adapt Mirkovic and Vilonen's proof to show Theorem \ref{main}. 
	
	First, we recall the definition of character sheaves following \cite[\S 4, \S 5]{Mars1989}. Let $G$ be a connected reductive group over a field $k$ of characteristic $0$, and $T$ a maximal torus of $G$. Fix a Borel $B=TU$ containing $T$ with unipotent radical $U$. Let $\Phi$ be the set of roots of $G$ and $\Delta\subseteq \Phi$ the choice of positive roots defined by $B$. 
	
	Every root $a\in \Phi$ provides us with an one-parameter subgroup $x_a:k\maps G$. Let $X_a:=im(x_a)$ denote the image.
	
	For an element $w\in W$, we define $R_w:=\{a\in \Phi \mid a\in \Delta, -w^{-1}a\in \Delta \}.$ We denote by $U_w\subseteq U$ the subgroup of $U$ generated by $T$ and the $X_a$ for $a\in R_w.$
	
	We fix representatives $\dot{w}\in N_G(T)/T$ for the elements $w\in W$. By Bruhat decomposition, $G=\bigsqcup_{w\in W} B\dot{w}B$. We write $G_w:= B\dot{w}B$ and we remark as in \cite[\S 4.1.1]{Mars1989} that the map $U_w\times T \times U \maps G_w$ defined by $(u,t,u')\mapsto u\dot{w}tu'$ is an isomorphism of varieties, which provides us with a projection $pr: G_w\rightarrow T$ by setting $pr(u\dot{w}tu')=t$.
	
	For a Kummer local system $\ls$ on $T$, we define $\ls_{ w}:=pr^*\ls$, and $A_{w}^{\ls}:=IC(G_w,\ls_{w}).$ Then, as in \cite[\S 2.1]{Mirkovic1988}, we can define character sheaves in terms of the induction functor $\Gamma_B^G: D_B(G)\maps D_G(G)$. We say that a Kummer local system $\ls$ on $T$ is fixed by $w$ if $w^*\ls=\ls.$ Recall that for a Kummer local system $\ls$ fixed by $w\in W$, $A_{w}^{\ls}$ is a $B$-equivariant perverse sheaf on $G$ by \cite[Lemma 4.1.2]{Mars1989}.
	
	For a semisimple complex, we call irreducible constituents the simple perverse sheaves appearing as shifts of direct summands.
	
	\begin{definition}\label{cs}
		Character sheaves are the irreducible constituents of the complexes $K_w^{\ls}:=\ind(A_w^{\ls})$ for all choices of $w\in W$ and a Kummer local system $\ls$ on $T$ fixed by $w$.
	\end{definition}
	
	Finally, we prove Theorem \ref{main}.
	
	\begin{proof}
		Let $\f$ be a character sheaf on $G$. By Definition \ref{cs}, there exists a Kummer local system $\ls$ on $T$ and an element $w\in W$ such that $\ls$ is fixed by $w$ and $\f$ is an irreducible constituent of $K_w^{\ls}$. By Proposition \ref{ssprop}.(i), $SS(\f)\subseteq SS(K_w^{\ls})$. By Lemma \ref{indbound}, $SS(K_w^{\ls})=SS(\ind A_w^{\ls})\subseteq G\cdot SS(A_w^{\ls})$. Since $\nn$ is $G$-equivariant, it is enough to show $SS(A_{w}^{\ls})\subseteq G\times \nn$.
		
		Since $\ls$ is Kummer, $\ls$ is tame by Proposition \ref{tameq}, and therefore $\ls_{w}$ is tame by Proposition \ref{tampb}. $\ls_w$ is an irreducible local system since $\ls$ is, and therefore by Definition \ref{tps} $A_{w}^{\ls}$ is a tame perverse sheaf. 	
		
		As in \cite[Proof of Theorem 2.7]{Mirkovic1988}, the fiber of the conormal bundle at $g$ of a Bruhat cell $BgB$ is $n \cap n^g$ where $n$ is the Lie algebra of $U$, and $n^g:=gng^{-1}$.
		Since the conormal bundles to Bruhat cells are nilpotent and $A_{w}^{\ls}$ is tame, we have $SS(A_{w}^{\ls})\subseteq G\times \nn$ by Lemma \ref{conormality} applied to the Bott-Samelson resolution.
	\end{proof}

	\printbibliography
	
\end{document}